\documentclass[11pt]{imsart}

\usepackage{subfigure}
\usepackage[font=small,labelfont=bf]{caption}
\usepackage{caption}
\usepackage[letterpaper]{geometry}
\usepackage{amsmath, amsthm, amsfonts, amsbsy, thmtools, amssymb}
\declaretheorem{theorem}
\usepackage{thm-restate}
\usepackage{hyperref}
\usepackage{cleveref}
\usepackage[mathscr]{euscript}
\usepackage{tikz}
\usepackage{enumitem}
\usepackage{stmaryrd}
\usepackage{comment}
\usetikzlibrary{arrows}

\makeatletter
\def\namedlabel#1#2{\begingroup
	#2%
	\def\@currentlabel{#2}%
	\phantomsection\label{#1}\endgroup
}
\makeatother

\numberwithin{equation}{section}

\setcounter{tocdepth}{1}

\def\PP{\mathbb{P}}

\def\Z{\mathbb{Z}}

\def\R{\mathbb{R}}

\newtheorem{thm}{Theorem}[section]

\newtheorem{cor}[thm]{Corollary}
\newtheorem{conj}[thm]{Question}
\theoremstyle{remark}

\theoremstyle{definition}

\begin{document}

\begin{frontmatter}

\title{Francis Comets' Gumbel last passage percolation}
\runtitle{Comets Gumbel LPP}
\runauthor{Corwin}

\begin{aug}
 \author{\fnms{Ivan}  \snm{Corwin}}
\address{Columbia, Department of Mathematics, 2990 Broadway, NY, NY 10027,  corwin@math.columbia.edu}   



\end{aug}

\begin{abstract}
In 2015, Francis Comets shared with me a clever way to relate a model of directed last passage percolation with i.i.d. Gumbel edge weights to a special case of the log-gamma directed polymer model. To my knowledge, he never wrote this down.
In the wake of his recent passing I am recording Francis' observation along with some associated asymptotics and discussion.
This note is dedicated in memory of Francis whose work in the study of directed polymers defined and refined the field immensely.
\end{abstract}


%

\end{frontmatter}

\section{Main result}

For $E_1,E_2, E_3$ independent exponential random variables of rate $1$, and any $z_1,z_2>0$
\begin{equation}\label{eq:exp}
\min(E_1/z_1,E_2/z_2) \stackrel{(d)}{=} E_3/(z_1+z_2).
\end{equation}
This simple fact will link the log-gamma directed polymer model with parameter $\gamma=1$ (i.e., weights are exponential distributed) to a new model of Gumbel last passage percolation. Such a link between a positive temperature and zero temperature model is  novel and it would be interesting to see if any extensions of this type of connection can be found.

Let us first define the new model of {\it Gumbel last passage percolation}.
Write $X\sim \mathcal{E}$ if $X$ is distributed as an exponential random variable of rate $1$, and $Y\sim \mathcal{G}$ if $Y=\log(1/X)$ for $X\sim \mathcal{E}$. Such a $Y$ is Gumbel distributed so that $\PP(Y\leq y) = e^{-e^{-y}}$. This exponential to Gumbel link is key to matching the models considered here.

Define two families of i.i.d. Gumbel random variables $(U_{m,n})_{n,m\geq 1}$ and $(V_{m,n})_{m,n\geq 1}$. For $m,n\geq 1$ let $T_{m,n}$ be defined via the following recursion relation with boundary conditions. Let $T_{1,1}\sim \mathcal{G}$ and $T_{0,n}=T_{m,0}=-\infty$ for $m,n\geq 2$. For $m,n\geq 1$ except $m=n=1$, define
\begin{equation}\label{trec}
T_{m,n} = \max\left(T_{m-1,n}+U_{m,n}, T_{m,n-1} + V_{m,n}\right).
\end{equation}
This recursion implies that $T_{m,n}$ is a last passage time. Let $\pi:(1,1)\to (m,n)$ denote a lattice path from $(1,1)$ to $(m,n)$ that only takes unit steps to the up and right (i.e., in the direction of the two coordinate axes). Associate to edges between $(m-1,n)$ and $(m,n)$ the weight $U_{m,n}$ and to the edge between $(m,n-1)$ and $(m,n)$ the weight $V_{m,n}$. Associate to an up-right path $\pi$ a weight $Wt(\pi)$ equal to $T_{1,1}$ plus the sum of all edge weights along the path $\pi$. Then it is easily seen that
$$
T_{m,n} = \max_{\pi:(1,1)\to (m,n)} Wt(\pi).
$$
This Gumbel last passage percolation model is illustrated on the right of Figure \ref{fig:LPPs1}

The {\it log-gamma polymer} \cite{timo} can also be defined through a recursion or path formulation. Write $X\sim \Gamma(\gamma)$ if $X$ has the law of a gamma random variable with shape parameter $\gamma$ and scale parameter $1$, and $Y\sim \Gamma^{-1}(\gamma)$ if $Y=1/X$ for some $X\sim \Gamma(\gamma)$, i.e., $Y$ has the law of an inverse gamma random variable.

For $\gamma>0$ fixed and $m,n\geq 1$ let  $w_{m,n}\sim \Gamma^{-1}(\gamma)$ be i.i.d. and let $Z_{1,1}=w_{1,1}$ and for either $m>1$ or $n>1$ let
\begin{equation}\label{zrec}
Z_{m,n} = (Z_{m-1,n}+Z_{m,n-1}) w_{m,n}
\end{equation}
with boundary conditions that $Z_{m,n}=0$ if either $m=0$ and $n>1$ or $m>1$ and $n=0$. This is called the {\it partition function for the log-gamma polymer} from $(1,1)$ to $(m,n)$ and $\log Z_{m,n}$ is called the {\it free energy}. This name is justified by the path formulation for $Z_{m,n}$ achieved by iterating the recursion:
$$Z_{m,n} = \sum_{\pi:(1,1)\to (m,n)} \prod_{(i,j)\in \pi} w_{i,j}$$
where $(i,j)\in \pi$ if and only if the point $(i,j)$ is included in the path $\pi$ (including the starting and ending points). This log-gamma polymer model is illustrated on the left of  Figure \ref{fig:LPPs1}.

\begin{figure}[h]
\scalebox{0.45}{\includegraphics{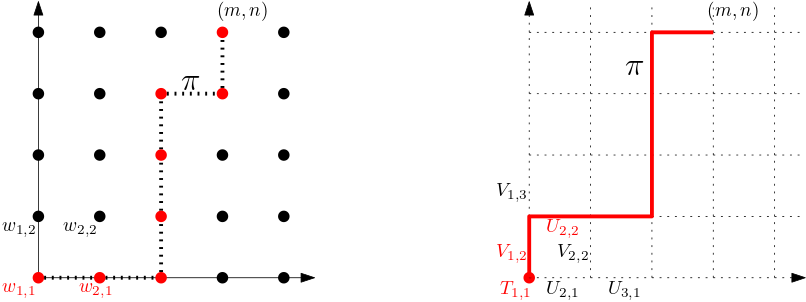}}
\captionsetup{width=\linewidth}
 \caption{Left: The log-gamma polymer partition function is a sum over all paths $\pi$ of the product of inverse-gamma weights $w_{i,j}$ along $\pi$. Right: The Gumbel last passage time is the maximum over all paths $\pi$ of the weights along the edges of $\pi$ ($U$'s for horizontal edges and $V$'s for vertical edges) plus a single weight on the vertex $(1,1)$. All weights are Gumbel distributed.}
\label{fig:LPPs1}
\end{figure}

\begin{theorem}\label{thm:1}
The law of the Gumbel last passage time and the free energy of the log-gamma polymer with $\gamma=1$ are equal in distribution. That is,
\begin{equation}\label{eq:thm}
\Big(T_{m,n}\Big)_{m,n\geq 1} \stackrel{(d)}{=}  \Big(\log Z_{m,n}\Big)_{m,n\geq 1}.
\end{equation}
\end{theorem}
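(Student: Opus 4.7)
The plan is to establish the joint equality by induction on sites ordered by $m+n$, matching both processes one site at a time. Both recursions share the same structure: the value at $(m,n)$ is a deterministic function of its two predecessors together with fresh site-$(m,n)$ noise that is independent of everything at prior sites. Consequently the joint law of either family is pinned down by (i) the law at the corner $(1,1)$ and (ii) the conditional law of the $(m,n)$ value given its two predecessors. It therefore suffices to match these two ingredients across the two models.

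The corner step is immediate: $\log Z_{1,1} = \log w_{1,1} = -\log E_{1,1}$ is Gumbel, and so is $T_{1,1}$ by definition. The conditional step reduces to the one-step identity
\[
\max(a+U,\, b+V) \stackrel{(d)}{=} \log(e^a + e^b) + G, \qquad a,b\in\R,
\]
where $U,V,G$ are i.i.d.\ Gumbel. This identity is exactly \eqref{eq:exp} rewritten in logarithmic coordinates: set $z_1 = e^a$ and $z_2 = e^b$, apply $-\log$ to both sides of $\min(E_1/z_1,E_2/z_2)\stackrel{(d)}{=} E_3/(z_1+z_2)$, and use that $-\log E \sim \mathcal{G}$ when $E\sim\mathcal{E}$. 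A direct CDF calculation also gives the same result, with both sides having distribution function $y\mapsto \exp(-(e^a+e^b)e^{-y})$. Under the convention $e^{-\infty}=0$ the identity covers the boundary diagonals $m=1$ and $n=1$ uniformly; for instance at $(1,2)$ one takes $a=-\infty$ (coming from $Z_{0,2}=0$, i.e.\ $\log Z_{0,2}=-\infty$) and $b=\log Z_{1,1}$, and both sides collapse to $\log Z_{1,1}+G$, matching $T_{1,2}=T_{1,1}+V_{1,2}$.

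With this identity in hand the induction closes at once: conditional on the previous sites, \eqref{zrec} yields $\log Z_{m,n} = \log(e^{\log Z_{m-1,n}}+e^{\log Z_{m,n-1}}) + G_{m,n}$ with $G_{m,n}:=-\log E_{m,n}$ a Gumbel independent of the past, and \eqref{trec} yields $T_{m,n}=\max(T_{m-1,n}+U_{m,n},T_{m,n-1}+V_{m,n})$, so the one-step identity aligns their conditional laws site by site. There is no substantive obstacle: the only care needed is to remember that a joint (not merely marginal) distributional equality is required, which the site-wise-independent noise structure of both recursions handles automatically. In short, the content of the theorem is already fully encoded in \eqref{eq:exp}; the rest is bookkeeping.
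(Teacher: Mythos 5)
Your proof is correct and takes essentially the same route as the paper: a site-by-site induction in which the one-step conditional laws of the two recursions are matched via \eqref{eq:exp} (equivalently your Gumbel CDF computation, i.e.\ $\max(a+U,b+V)\stackrel{(d)}{=}\log(e^a+e^b)+G$), with the joint equality propagated by the independence of the fresh noise at each site. The only cosmetic differences are that the paper organizes the induction over growth regions with an explicit coupling on the already-matched region and treats the boundary $m=1$ or $n=1$ by a separate direct coupling, whereas you order sites by $m+n$ and absorb the boundary uniformly via the $e^{-\infty}=0$ convention.
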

\begin{proof}
For $\gamma=1$, if $X\sim \Gamma(1)$ then it also has exponential rate one law $\mathcal{E}$ and thus, as noted earlier, $\log(1/X)$ has Gumbel law $\mathcal{G}$. On the boundary we can couple the randomness in the Gumbel LPP and log-gamma polymer so that $T_{1,1} = \log w_{1,1}$, $U_{m,1} = w_{m,1}$ for $m\geq 2$ and $V_{1,n} = w_{1,n}$ for $n\geq 2$. It follows immediately that the joint laws of $(T_{m,n})$ and $(\log Z_{m,n})$ match for $m,n\geq 1$ with either $m=1$ or $n=1$ (i.e., the boundary).

The matching of the law in the bulk proceeds differently. We say that $R\subset \Z_{\geq 1}^2$ is  a {\it growth region} if $(1,1)\in R$ and if  all lattice paths $\pi:(1,1)\to (m,n)\in R$ lie in $R$. In other words $R$ looks like a French convention Young diagram with its bottom-left point at $(1,1)$.

Assume that \eqref{eq:thm} has been established for all $(m,n)\in R$. We will prove that the matching consequently also holds after adding a box to the Young diagram of $R$. Inductively this implies \eqref{eq:thm} for all $(m,n)\in \Z_{\geq 1}^2$ as desired. To do this note that owing to the inductively assumed distributional match on $R$ it is possible to couple $T_{m',n'}$ and $\log Z_{m',n'}$ to be equal for all $(m',n')\in R$. Assume now that $(m,n)\notin R$ while $(m-1,n),(m,n-1)\in R$. Under this coupling we claim that the distributions of $T_{m,n}$ and $\log Z_{m,n}$ are equal and hence the region of matching can be expanded to include $(m,n)$ as well. For short-hand, write $z_{1} = Z_{m-1,n}=e^{T_{m-1,n}}$ and $z_{2} = Z_{m,n-1}=e^{T_{m,n-1}}$, and also write
$U_{m,n} = -\log E_1$, $V_{m,n}=-\log E_2$ and $w_{m,n}=1/E_3$ in terms of exponential rate one random variables $E_1,E_2,E_3\sim \mathcal{E}$.
The recursion \eqref{trec} for $T_{m,n}$ and \eqref{zrec} for $Z_{m,n}$ can be rewritten  as
$$e^{T_{m,n}} = \max(z_1/E_1,z_2,E_2),\qquad Z_{m,n} = (z_1+z_2)/E_3.$$
Inverting both sides yields
$$
1/e^{T_{m,n}}=  \min(E_1/z_1,E_2/z_2)\stackrel{(d)}{=} E_3/(z_1+z_2) = 1/Z_{m,n}
$$
where the middle equality follows from \eqref{eq:exp}. This proves the induction.
\end{proof}

The fluctuations of the log-gamma polymer free energy have been probed for general $\gamma$ and shown to be described by the Tracy-Widom GUE distribution as $m$ and $n$ tend to infinity with any fixed ratio (\cite[Theorem 1.2]{BCD} gives the general result, while \cite{BCR} and \cite{KQ} provide earlier asymptotics for some parameters). The following limit theorem for the Gumbel last passage time is a corollary of these asymptotic results along with \Cref{thm:1}. It is  stated for $m=n\to \infty$ though the general $m,n\to \infty$ result is also known.

\begin{cor}\label{cor:1}
Define the digamma function $\Psi(x) = \Gamma'(x)/\Gamma(x)$. Let $C= -2\Psi(1/2)\sim 3.927$ and $\sigma = (-\Psi''(1/2))^{1/3}\sim 2.563$.  Then for all $s\in \R$,
$$
\lim_{n\to \infty} \PP\left(\frac{T_{n,n}-C\cdot n}{\sigma \cdot n^{1/3}}\leq r\right) = F_{\mathrm{GUE}}(r)
$$
where $F_{\mathrm{GUE}}(r)$ is the GUE Tracy-Widom distribution.
\end{cor}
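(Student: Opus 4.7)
The plan is to reduce the claim to the Tracy--Widom asymptotics already established for the log-gamma polymer. By \Cref{thm:1}, $T_{n,n}$ and $\log Z_{n,n}$ (for the log-gamma polymer at $\gamma=1$) are equal in distribution, so it suffices to establish the corresponding limit theorem for $\log Z_{n,n}$.

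For this I would directly invoke \cite[Theorem 1.2]{BCD}, which proves that for any fixed $\gamma > 0$ and any $r\in\R$,
$$
\lim_{n\to\infty} \PP\!\left(\frac{\log Z_{n,n} + 2n\Psi(\gamma/2)}{(-\Psi''(\gamma/2))^{1/3}\, n^{1/3}} \leq r\right) = F_{\mathrm{GUE}}(r).
$$
The diagonal centering and scaling here arise from a steepest-descent analysis of a Mellin--Barnes representation for the Laplace transform of $Z_{n,n}$ (as in the Macdonald process framework), where the symmetric $(n,n)$ direction places the saddle at $\gamma/2$. Specializing to $\gamma=1$ gives centering $-2n\Psi(1/2) = -Cn$ and scaling $\sigma n^{1/3}$, matching the constants appearing in \Cref{cor:1} exactly.

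There is no substantive obstacle: the corollary is a direct composition of \Cref{thm:1} with \cite[Theorem 1.2]{BCD}. The only bookkeeping I would do is to confirm that the inverse-gamma weight convention used in \cite{BCD} (shape parameter $\gamma$, scale $1$) agrees with the one used here at $\gamma=1$, so that the exponential-to-Gumbel link underlying \Cref{thm:1} indeed applies, and that the quoted diagonal constants $-2\Psi(\gamma/2)$ and $(-\Psi''(\gamma/2))^{1/3}$ reduce numerically to the values $C\sim 3.927$ and $\sigma\sim 2.563$ stated in the corollary.
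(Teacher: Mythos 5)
Your proposal is correct and follows exactly the route the paper takes: reduce via \Cref{thm:1} to the free energy of the $\gamma=1$ log-gamma polymer and then quote the Tracy--Widom limit of \cite[Theorem 1.2]{BCD} (with \cite{BCR}, \cite{KQ} as earlier partial results), specializing the diagonal constants $-2\Psi(\gamma/2)$ and $(-\Psi''(\gamma/2))^{1/3}$ to $\gamma=1$. The bookkeeping you flag (weight convention and the numerical values of $C$ and $\sigma$) is all that remains, and it checks out.
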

This type of GUE Tracy-Widom distributional limit is only known for a few special models despite being widely believed to be universal. This adds to the list of such models.

\section{Discussion}

The matching in Theorem \ref{eq:thm} was communicated to me by Francis. At the time we were a bit confused what to think of the model since the weights are $\R$-valued which seems a bit unnatural for a last passage percolation model. Since Francis' passing I have reflected a bit further on this model and would like to offer a few observations that show that despite this point, the model is, in fact, quite natural and related to a rather interesting family of $\R_{\geq 0}$-valued last passage percolation and growth models. The key here is that the Gumbel distribution arises through extreme value theory.

Consider any distribution $D$ whose extreme-value statistics are Gumbel distributed, i.e., if $X^{(1)},\ldots, X^{(N)}$ are i.i.d. with distribution $D$ there should exist $C_N$ and $\sigma_N$ such that
\begin{equation}\label{eq:gumbelextreme}
\Big(\max_{i} X^{(i)} - C_N\Big)/\sigma_N\Rightarrow \mathcal{G},
\end{equation}
the Gumbel distribution. Note that the Gumbel has a broad domain of attraction, namely unbounded (in the positive direction) distributions which are not heavy-tailed.

Now imagine an LPP model in which
$T^{(N)}_{1,1}= \max(X^{(i)})$ for $X^{(i)}$ as above and $i\in \{1,\ldots ,N\}$, $T_{0,n}=T_{m,0}=-\infty$ for $m,n\geq 2$ and for $m,n\geq 1$ except $m=n=1$,
$$
T^{(N)}_{m,n} =\max_i\left(T^{(N)}_{m-1,n}+U^{(i)}_{m,n}, T^{(N)}_{m,n-1} + V^{(i)}_{m,n}\right).
$$
Here the maximum is taken over all $i\in \{1,\ldots ,N\}$ over the two types of terms; the $U^{(i)}$ and $V^{(i)}$ are all i.i.d. with distribution $D$.
This $T^{(N)}_{m,n}$ also admits a path maximization formulation in which the maximum is over all paths from $(1,1)$ to $(m,n)$ and over all weights (including the $X^{(1)},\ldots, X^{(N)}$ weights at $(1,1)$)  encountered along those paths. This last passage percolation model, which can be called an $N$ multi-edge LPP, is illustrated in Figure \ref{fig:LPPs2}.

\begin{figure}[h]
\scalebox{0.45}{\includegraphics{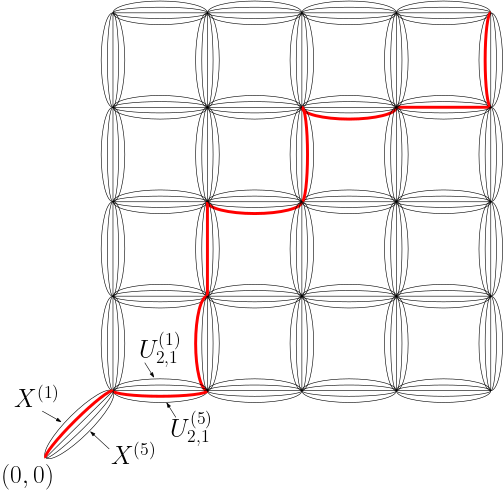}}
\captionsetup{width=\linewidth}
 \caption{The $N=5$ multi-edge last passage percolation model is illustrated here. The last passage time $T^{(N)}_{m,n}$ is equivalent to the maximum over all paths from $(0,0)$ to $(m,n)$ of the sum of the edge weights encountered along the path. Here the red path is supposed to indicate the maximal path. Note that the presence of the multi-edges implies that there are many choices of how to traverse between two vertices, and the maximal choice is taken. In the case of exponential edge weights, the set of all points whose last passage time is at most $t$ defines a Markovian growth model variant of the corner growth model that is described below. Basically, once a vertex is in the growth region, $N$ exponential clocks start running along the outgoing horizontal and vertical edges. Once all of the clocks along the horizontal and vertical edges incoming to a vertex have rung, then the vertex is included in the growth region.}
\label{fig:LPPs2}
\end{figure}

For fixed $m,n$, it follows from the Gumbel extreme-value limit \eqref{eq:gumbelextreme} of $D$ that
$$
\lim_{N\to\infty} \frac{T^{(N)}_{m,n} - C_N(m+n-1)}{\sigma_N} \Rightarrow T_{m,n}.
$$
This and \Cref{cor:1} implies a universality result if one takes $N\to \infty$ and then $m,n\to \infty$.

\begin{conj}
Can the above limits can be taken simultaneously? If $N$ grows fast enough relative to $m$ and $n$  both the centering and scaling of \Cref{cor:1} should hold if $T_{m,n}$ is replaced by $(T^{(N)}_{n,n} - C_N(2n-1))/\sigma_N$. For $N$ growing a bit slower, the law of large numbers centering in \Cref{cor:1} should still hold, but the fluctuations should be affected through a non-universal shift. For $N$ growing too slowly (or not at all), the centering and scaling in \Cref{cor:1} should break down, though by KPZ universality one expects to see $n^{1/3}$ scaling and GUE Tracy-Widom fluctuations with a different centering constant than $C_N$ and scaling constant than $\sigma_N$.
\end{conj}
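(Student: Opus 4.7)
The starting point is to absorb the multi-edge structure into single effective weights. Setting $\tilde U_{m,n}:=\max_{i\le N}U^{(i)}_{m,n}$, $\tilde V_{m,n}:=\max_{i\le N}V^{(i)}_{m,n}$, and $\tilde X_{1,1}:=\max_{i\le N} X^{(i)}$, the recursion for $T^{(N)}$ reduces to \eqref{trec} with $(U,V,T_{1,1})$ replaced by $(\tilde U,\tilde V,\tilde X_{1,1})$. Thus $T^{(N)}_{m,n}$ is an LPP over the same lattice paths as $T_{m,n}$, only with the Gumbel weights traded for max-of-$N$ samples from $D$. The question becomes how well, after centering by $C_N(m+n-1)$ and scaling by $\sigma_N$, this LPP approximates the Gumbel LPP $T_{m,n}$ as $n\to\infty$.

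For the fast regime, I would invoke a quantitative form of \eqref{eq:gumbelextreme}---e.g.\ under a von Mises or Balkema--de Haan second-order regularity assumption on the upper tail of $D$---to produce a Skorohod coupling $(\tilde W-C_N)/\sigma_N = G + \varepsilon$ with $G\sim\mathcal G$ and $\|\varepsilon\|_{L^1}\le \delta_N\downarrow 0$. Comparing the maximisers of the two LPP problems yields
$$\Bigl|\tfrac{T^{(N)}_{n,n}-C_N(2n-1)}{\sigma_N}-T_{n,n}\Bigr|\le \max\Bigl(\bigl|{\textstyle\sum_{v\in\pi^\star}\varepsilon_v}\bigr|,\bigl|{\textstyle\sum_{v\in\hat\pi}\varepsilon_v}\bigr|\Bigr),$$
where $\pi^\star$ and $\hat\pi$ are optimal for $T_{n,n}$ and $T^{(N)}_{n,n}$ respectively. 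Using transversal fluctuation bounds for the log-gamma polymer (and hence, via \Cref{thm:1}, for the Gumbel LPP) to confine both paths to an $n^{2/3}$-wide tube, then union-bounding the path-summed error over the paths in that tube, I expect the right-hand side to be $o(n^{1/3})$ whenever $\delta_N$ is a suitable negative power of $n$. Combining with \Cref{cor:1} via Slutsky then yields the Tracy--Widom GUE limit with centering $C_N(2n-1)$ and scale $\sigma_N\cdot \sigma n^{1/3}$.

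For intermediate $N$ I would decompose $\varepsilon = \mu_N + \tilde\varepsilon$ with $\mu_N=\mathbb E\varepsilon$. The deterministic drift contributes $(2n-1)\mu_N$ to the centering, producing exactly the non-universal shift predicted in the question, while the centered part $\tilde\varepsilon$ enjoys CLT-type cancellation along any fixed path and so survives longer before contaminating the $n^{1/3}$ fluctuations. When $N$ is too small, the effective weight distribution is no longer close to Gumbel and \Cref{cor:1} is simply unavailable. A Kingman subadditive ergodic argument still provides a law of large numbers $T^{(N)}_{n,n}/n\to c^{(N)}$ for some implicit constant, but the predicted $n^{1/3}$ Tracy--Widom statement is an instance of the general KPZ universality conjecture and appears inaccessible outside integrable choices of $D$.

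The main obstacle will be pinning down the quantitative rate $\delta_N$ in the Gumbel approximation for generic $D$: the speed in Gnedenko's theorem can be as slow as $1/\log N$ (already for standard Gaussian tails), which would force $N$ to grow doubly-exponentially in $n$ before the fast regime applies. A secondary technical difficulty is controlling the path-summed errors $\sum_{v\in\pi}\varepsilon_v$ uniformly over the exponentially many competing paths without a crippling union-bound loss; here the log-gamma transversal concentration input is essential, and sharpening it to large-deviation scale is itself non-trivial.
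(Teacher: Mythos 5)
You should first note that the paper does not prove this statement at all: the \texttt{conj} environment is declared as ``Question,'' and the surrounding text explicitly leaves the matter ``for future work to a motivated reader,'' pointing out that the needed quantitative ingredients (a rate of convergence in \eqref{eq:gumbelextreme} and a rate of convergence of the log-gamma free energy to Tracy--Widom) are not available. So there is no paper proof to match, and what you have written is, accordingly, a research plan rather than a proof. As a plan it is sensible, and one feature is genuinely nice: by coupling the effective edge weights $\tilde U,\tilde V$ (your reduction of the multi-edge recursion to a single max-of-$N$ weight per edge is correct, including the count $m+n-1$ for the centering) directly to Gumbel weights and comparing $T^{(N)}_{n,n}$ with $T_{n,n}$ \emph{at the same} $n$, you would bypass one of the two ingredients the paper flags, namely a Tracy--Widom convergence rate for the log-gamma polymer; only the extreme-value rate $\delta_N$ and \Cref{cor:1} itself would be needed, via Slutsky.

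However, as a proof the proposal has concrete gaps beyond the ones you acknowledge. First, an $L^1$ bound $\|\varepsilon\|_{L^1}\le\delta_N$ is not enough to control $\max\bigl(\bigl|\sum_{v\in\pi^\star}\varepsilon_v\bigr|,\bigl|\sum_{v\in\hat\pi}\varepsilon_v\bigr|\bigr)$, because $\pi^\star$ and $\hat\pi$ are random and weight-dependent; you need a quantile coupling with a uniform (high-probability) per-edge bound, which is exactly the second-order extreme-value input you have not specified for general $D$. Second, restricting to an $n^{2/3}$-wide tube does not make the union bound harmless: the number of up-right paths in such a tube is still of order $e^{c\,n^{2/3}\log n}$, so CLT-type cancellation along a fixed path does not survive a naive union bound; a cruder bound $2n\cdot\max_v|\varepsilon_v|$ over the $O(n^2)$ relevant edges would actually suffice (at the cost of logarithmic factors) and would remove the need for transversal fluctuations altogether. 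Third, your parenthetical ``and hence, via \Cref{thm:1}, for the Gumbel LPP'' overreaches: \Cref{thm:1} matches the laws of the passage-time \emph{fields} $(T_{m,n})$ and $(\log Z_{m,n})$, not geodesics or quenched polymer paths, so transversal-fluctuation estimates for the Gumbel LPP geodesic do not transfer automatically; one would have to rerun a Newman--Piza/Basu--Sidoravicius--Sly-type argument using tail bounds on $T_{m,n}$ in all directions (which do follow from \Cref{thm:1} plus known log-gamma estimates, but this is an argument, not a citation). Finally, the intermediate and slow regimes are described only qualitatively, with no thresholds for $N(n)$ and no identification of the ``non-universal shift'' beyond $\mu_N$; this is consistent with the paper's expectations but does not establish them. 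In short: the strategy is plausible and partially improves on the route the paper hints at, but the statement remains, as in the paper, an open question.
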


To prove such results, one presumably should appeal to a rate of convergence result for both the Gumbel distribution from the maximum of $N$ i.i.d. distribution $D$ random variables, as well as the rate of convergence for the log-gamma polymer free energy to the GUE Tracy-Widom distribution. This later result is not present in the literature and would require some work hence the above question is left for future work to a motivated reader.

A particularly simple choice for the distribution $D$ is the exponential rate $1$ distribution $\mathcal{E}$. In this case, $C_N=\log(N)$ and $\sigma_N=1$ in \eqref{eq:gumbelextreme}.
For fixed $N$, this exponential choice of $D$ actually leads to a fairly simple Markovian growth model variant of the corner growth model that is briefly describe here. Consider a state-space given by assignments of numbers in $\{0,\ldots, N\}$ to the site $(1,1)$ as well as every nearest-neighbor edge in $(\Z_{\geq 1})^2$. Write $A(1,1)$ for the assignment at $(1,1)$, $B(m,n)$ for the assignment for the edge from $(m-1,n)$ to $(m,n)$ and $C(m,n)$ for the assignment for the edge from $(m,n-1)$ to $(m,n)$. We introduce a subscript $t$ to denote the Markov process and define boundary conditions so that $B_t(1,n)=N$ for all $t\geq 0$ and $n\geq 2$ and $C_t(m,1)=N$ for all $t\geq 0$ and $m\geq 2$. Initially $A_0(1,1)=B_t(m,n)=C_t(m,n)=0$ for all $(m,n)\in (\Z_{\geq 1})^2$ besides those values of $B_t$ and $C_t$ that have been fixed to equal $N$. These assignments evolve in the following Markovian manner. If $A_t(1,1)=i<N$ then it increases by one at rate $(N-i)^{-1}$. For $(m,n)\in (\Z_{\geq 1})^2$ excluding $(1,1)$, if $B_t(m,n)=C_t(m,n)=N$ then define an auxiliary assignment $A_t(m,n)=N$. For any $(m,n)\in (\Z_{\geq 1})^2$, if $A_t(m,n)=N$ and $B_t(m+1,n)=i<N$ then increase $B_t(m+1,n)$ by one at rate $(N-i)^{-1}$ and likewise if $A_t(m,n)=N$ and $C_t(m,n+1)=i<N$ then increase $C_t(m,n+1)$ by one at rate $(N-i)^{-1}$. In words, once the assignment at $(1,1)$ equals $N$, we start to allow the assignments on edges leaving $(1,1)$ to grow at rates that linearly decrease from $N$ to $1$ as the assignments increase. Once the assignments into a site $(m,n)$ are both $N$, then $(m,n)$ becomes part of the growing substrate and the grow on its outgoing edges is triggered.

To close out this discussion, here is another compelling and rather open-ended question. The log-gamma polymer has a parameter $\gamma$ that was fixed to be $1$ here. Do there exist other models like Gumbel LPP that can be linked to the free energy for $\gamma \neq 1$? And for that matter, are there any other directions in which to generalize this matching, such as to the other solvable polymer models?

\section{Acknowledgements}
This article was mostly written in June 2022 right after Francis' passing while I was delivering lectures at the PIMS summer school. I would like to thank the organizers of that school and acknowledge partial funding from NSF DMS:1952466. My research is partially funded by the NSF through DMS:1811143, DMS:1937254 and DMS:2246576, through the Simons Foundation through a Fellowship in Mathematics grant (\#817655) and an Investigator in Mathematics grant (\#929852), and through the W.~M.~Keck Foundation Science and Engineering Grant on Extreme Diffusion.

\end{document}